\newcommand{\gl}{\mathfrak{gl}}
\newcommand{\ssl}{\mathfrak{sl}}
\newcommand{\so}{\mathfrak{so}}
\newcommand{\su}{\mathfrak{su}}
\DeclareMathOperator{\Exp}{Exp}
\DeclareMathOperator{\Iso}{Iso}
\DeclareMathOperator{\SL}{SL}
\DeclareMathOperator{\sspan}{span}
\DeclareMathOperator{\SU}{SU}
\theoremstyle{plain}
\newtheorem{theorem}{Theorem}[section]
\newtheorem{proposition}[theorem]{Proposition}
\newtheorem{lemma}[theorem]{Lemma}
\newtheorem*{utheorem}{Theorem}
\theoremstyle{definition}
\newtheorem{example}[theorem]{Example}
\theoremstyle{remark}
\newtheorem{remark}[theorem]{Remark}
\numberwithin{equation}{section}
\begin{document}

\title[The index of symmetry of $3$-dimensional Lie groups]{The index of symmetry of three-dimensional Lie groups with a left-invariant metric}

\author{Silvio Reggiani}

\address{CONICET and Universidad Nacional de Rosario, Dpto. de Matemática, ECEN-FCEIA, Av.\ Pellegrini 250, 2000 Rosario, Argentina }

\email{reggiani@fceia.unr.edu.ar}

\urladdr{\url{http://www.fceia.unr.edu.ar/~reggiani}}

\date{\today}

\thanks{Supported by CONICET. Partially supported by ANPCyT and SeCyT-UNR}

\keywords{Index of symmetry, unimodular Lie group, distribution of symmetry, naturally reductive space}

\subjclass[2010]{53C30, 53C35}

\begin{abstract}
  We determine the index of symmetry of $3$-dimensional unimodular Lie groups with a left-invariant metric. In particular, we prove that every $3$-dimensional unimodular Lie group admits a left-invariant metric with positive index of symmetry. We also study the geometry of the quotients by the so-called foliation of symmetry, and we explain in what cases the group fibers over a $2$-dimensional space of constant curvature.
\end{abstract}

\maketitle

\section{Introduction}

The index of symmetry of a Riemannian manifold $M$ is a geometric invariant which measures how far is $M$ from being a symmetric space. Informally, it can be defined as the infimum, over $p \in M$, of the maximum number of independent directions in $T_pM$ arising from Killing fields parallel at $p$. That is, if the index of symmetry of $M$ is $k$, then for each $p \in M$ there exist at least $k$ Killing fields $X_1, \ldots, X_k$ such that $X_1(p), \ldots, X_k(p)$ are linear independent in $T_pM$ and $(\nabla X_i)_p = 0$ for all $i = 1, \ldots, k$. So, the index of symmetry of $M$ equals $\dim M$ if and only if $M$ is a symmetric space. This notion appears first in \cite{olmos-reggiani-tamaru-2014} and it turned out to be an interesting way of studying homogeneous spaces. Most of the work was done in the compact case (see \cite{olmos-reggiani-tamaru-2014, berndt-olmos-reggiani-2015, podesta-2015}) including the classification of compact spaces with low co-index of symmetry.

The purpose of this article is to compute the index of symmetry of $3$-dimensional unimodular Lie groups with a left-invariant metric. Recall that, up to isomorphism, there are six $3$-dimensional simply connected, unimodular Lie groups: the abelian Lie group $\mathbb R^3$; $\SU(2)$ which is diffeomorphic to a $3$-dimensional sphere, the universal covering $\widetilde{\SL}(2, \mathbb R)$ of the real special linear group of degree $2$; the $3$-dimensional Heisenberg Lie group $H_1$; the universal covering $\tilde E(2)$ of the group of rigid motions of the euclidean plane; and the group of rigid motions $E(1, 1)$ of the Minkowski $2$-space. These groups are of a particular importance since most of the Thurston geometries ---except $\mathbb S^2 \times \mathbb R$, $\mathbb H^2 \times \mathbb R$ and $\mathbb H^3$--- can be realized as a left-invariant metric on a unimodular $3$-dimensional group. Since all the left-invariant metrics on $\SU(2)$ with non-trivial index of symmetry were determined in \cite{berndt-olmos-reggiani-2015}, we direct our attention to the non-compact unimodular Lie groups. As a consequence of our study we can state the following result.

\begin{utheorem}
  Every $3$-dimensional unimodular Lie group admits a left-invariant metric with positive index of symmetry.
\end{utheorem}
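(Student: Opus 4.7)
The plan is to prove the statement case by case across the six simply connected $3$-dimensional unimodular Lie groups. The abelian group $\mathbb{R}^3$ admits only flat left-invariant metrics and hence has maximal index of symmetry; the $\SU(2)$ case was settled in \cite{berndt-olmos-reggiani-2015}. The substantive work is therefore to exhibit, for each of $\widetilde{\SL}(2,\mathbb{R})$, $H_1$, $\tilde{E}(2)$, and $E(1,1)$, one specific left-invariant metric together with a nowhere-vanishing Killing field that is parallel at the identity.

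My general approach in each non-abelian case is to place the Lie algebra in a Milnor orthonormal frame $\{e_1,e_2,e_3\}$ with $[e_i,e_j]$ cyclically proportional to $e_k$, and to try to show that the distinguished ``fiber direction'' $e_3$ lies in the space of symmetry $\mathfrak{s}_e$. Since the right-invariant extension $\tilde{e}_3$ is automatically Killing for a left-invariant metric, a direct Koszul-type computation at $e$ produces an explicit skew-adjoint endomorphism $A = (\nabla\tilde{e}_3)_e$ of $T_eG$ whose entries are polynomials in the Milnor constants $\lambda_i$. Two mechanisms can then place $e_3$ in $\mathfrak{s}_e$: either (i) choose the $\lambda_i$ so that $A$ vanishes outright, or (ii) choose them so that the isometry group is strictly larger than $G$, providing an isotropy Killing field $Z$ with $Z(e)=0$ and $(\nabla Z)_e = -A$; in the latter case $\tilde{e}_3 + Z$ is nowhere vanishing, Killing, and parallel at $e$. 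For $H_1$, the Milnor metric with $[e_1,e_2]=e_3$ orthonormal has an $SO(2)$-isotropy rotating the $(e_1,e_2)$-plane and $A$ lies in the corresponding line of $\mathfrak{so}(T_eG)$, so (ii) applies. For $\tilde{E}(2)$, the choice $\lambda_1=\lambda_2>0$, $\lambda_3=0$ gives the well-known flat left-invariant metric, making the group isometric to $\mathbb{R}^3$ with index of symmetry equal to $3$. For $\widetilde{\SL}(2,\mathbb{R})$, a Berger-type metric with $\lambda_1=\lambda_2$ and $\lambda_3$ of opposite sign again yields $SO(2)$-isotropy, so (ii) succeeds.

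The hard case will be $E(1,1)$, whose Milnor sign pattern is $(+,-,0)$: the automorphism one would hope to use to enlarge the isometry group is a non-compact boost rather than a rotation, so it cannot appear as Riemannian isotropy and mechanism (ii) is unavailable. Instead I would use (i) directly. A Koszul calculation yields $2\langle A(e_1),e_2\rangle = \lambda_1 + \lambda_2$ with all other components of $A$ vanishing; because the sign pattern $(+,-,0)$ allows a Milnor metric with $\lambda_1 = -\lambda_2$, such a metric kills $A$ on the nose, and $\tilde{e}_3$ is itself a parallel Killing field at $e$. Beyond this, the remaining work in each of the four cases is the routine verification of the Koszul formula in Milnor normal form and the identification of the full isometry group of the chosen metric.
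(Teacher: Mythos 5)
Your proposal is correct and follows essentially the same route as the paper: a case-by-case analysis over the six groups in which one computes $(\nabla Y^*)_e$ via the Koszul formula for Killing fields and exhibits a field parallel at $e$, either as a pure right-invariant field (your mechanism (i), which is exactly the paper's $g_\nu$ on $E(1,1)$ and could equally serve for $\SL(2,\mathbb R)$ via $g_{\lambda,\lambda+1,1}$) or as a right-invariant field corrected by an isotropy Killing field when the isometry group is $4$-dimensional (your mechanism (ii), matching the paper's treatment of $H_1$ and of the Berger metrics $g_{\lambda,\mu,\mu}$ on $\SL(2,\mathbb R)$). The Milnor-frame packaging and the explicit two-mechanism dichotomy are a clean reorganization, but the underlying computations — including your formula $2\langle Ae_1,e_2\rangle=\lambda_1+\lambda_2$ for $E(1,1)$ and the use of the flat metric on $\tilde E(2)$ — coincide with those in the paper, which merely goes further by classifying \emph{all} metrics of positive index on each group.
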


The last part of the article concerns the geometry of the so-called foliation of symmetry. The leaves of the foliation of symmetry $\mathcal L$ of the Riemannian manifold $M$ consist of totally geodesic submanifolds which are extrinsically isometric to a symmetric space and of maximal dimension (this is defined precisely in Section~\ref{sec:preliminaries}). We prove that when $M$ is a $3$-dimensional unimodular Lie group, the foliation of symmetry induces a Riemannian submersion $M \to M/\mathcal L$ if and only if $M$ is $\SU(2)$, $\widetilde\SL(2, \mathbb R)$ or $H_1$ and the metric is naturally reductive. For the last two cases, we recover the line bundles $\widetilde\SL(2, \mathbb R) \to \mathbb H^2$ over the hyperbolic plane, an the nontrivial vector bundle $H_1 \to \mathbb R^2$. It is also worth mentioning that for these naturally reductive metrics, the foliation of symmetry is given by the set of fixed points of the full isotropy group. Thus, these spaces share this important property with compact naturally reductive spaces (see \cite{olmos-reggiani-tamaru-2014}). 

Finally, we want to point out that the article includes, in Section \ref{sec:preliminaries}, a general result which says that the distribution of symmetry of a homogeneous space cannot be of codimension $1$. This theorem requires some delicate arguments, including an identification of Killing fields on a Riemannian manifold $M$ with parallel sections of the associated vector bundle $TM \oplus \Lambda^2(TM)$ (see \cite{kostant-1955,console-olmos-2008}).

\subsection*{Acknowledgement} The author wants to thank Carlos Olmos for his very useful comments and suggestions on the manuscript, specially the inclusion of Theorem~\ref{sec:ind=1}.

\section{Preliminaries}\label{sec:preliminaries}

In this section we essentially present the definitions and basic results concerning the index of symmetry of an homogeneous manifold. We refer to \cite{olmos-reggiani-tamaru-2014} and \cite{berndt-olmos-reggiani-2015} for more details and some structure theory for compact manifolds. Let $M = G/H$ be a Riemannian homogeneous manifold, we say that $M$ has \emph{index of symmetry} $i_{\mathfrak s}(M) = k$ if for (any) $p \in M$ there exist Killing fields $X_1, \ldots, X_k$ such that $X_1(p), \ldots, X_k(p)$ are linear independent in $T_pM$, $(\nabla X_1)_p = \cdots = (\nabla X_k)_p = 0$ and $k$ is the greatest integer with such properties. In particular, $i_{\mathfrak s}(M) = \dim M$ if and only if $M$ is a symmetric space. The so-called \emph{distribution of symmetry} $\mathfrak s$ of $M$ is defined as
$$\mathfrak s_p = \{v \in T_pM: \text{there exists a Killing field $X$ with $X_p = v$ and $(\nabla X)_p = 0$}\}.$$

Note that $\mathfrak s$ is $G$-invariant and moreover, is invariant under the full isometry group $\Iso(M)$ of $M$. We have that $\mathfrak s$ is autoparallel (i.e., integrable with totally geodesic leaves). The integral manifold $L(p)$ of $\mathfrak s$ by $p$ is called the \emph{leaf of symmetry} by $p$. It is known that $L(p)$ is extrinsically isometric to a symmetric space. The \emph{foliation of symmetry} $\mathcal L$ of $M$ consists of all the leaves of symmetry of $M$.

\begin{example}
  [\cite{olmos-reggiani-tamaru-2014}] If $M = G/H$ is a compact locally irreducible naturally reductive space, and $M$ is not a symmetric space, then the distribution of symmetry of $M$ is the $G$-invariant distribution induced by the set of fixed vectors of $H$ in the tangent space (via the isotropy representation). In this case, the leaf of symmetry at a given point is a symmetric space of the group type.
\end{example}

In \cite{berndt-olmos-reggiani-2015} it is proved that $i_{\mathfrak s}(M) \le \dim M - 2$ whenever $M$ is a homogeneous compact space which is not symmetric. We adapt the argument to noncompact spaces and, in particular, to our case of interest (even to non-unimodular $3$-dimensional groups).

\begin{theorem}\label{sec:ind=1}
  Let $M$ be a simply connected homogeneous Riemannian manifold of dimension $n$, then $i_{\mathfrak s}(M) \neq n - 1$.
\end{theorem}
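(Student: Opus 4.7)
The plan is to argue by contradiction. Suppose $i_{\mathfrak s}(M) = n-1$, so $\mathfrak{s}$ is an autoparallel distribution of codimension one, invariant under $\Iso(M)$. Since $M$ is simply connected, the line sub-bundle $\mathfrak{s}^\perp$ is trivial and admits a global unit section $\xi$; by connectedness of $\Iso(M)_0$ together with the $\Iso(M)$-invariance of $\mathfrak{s}^\perp$, every isometry in the identity component preserves $\xi$. Consequently, every Killing field $Z$ satisfies $[Z,\xi]=0$, equivalently (by torsion-freeness of $\nabla$) $\nabla_Z\xi=\nabla_\xi Z$.

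Next I would analyze $\nabla\xi$. From $|\xi|=1$ and the autoparallelism of $\mathfrak{s}$ one obtains $\nabla_X\xi=0$ for every $X\in\mathfrak{s}$: indeed $\langle \nabla_X\xi, Y\rangle = -\langle \xi,\nabla_X Y\rangle = 0$ for $X,Y\in\mathfrak{s}$, while $\nabla_X\xi\perp\xi$ holds automatically. Hence $\nabla\xi$ is entirely determined by the $\Iso(M)_0$-invariant vector field $\eta:=\nabla_\xi\xi\in\mathfrak{s}$, of constant norm. If $\eta\equiv 0$, then $\xi$ is parallel; the de Rham decomposition theorem gives $M=L\times\mathbb R$, where $L$ is a leaf of $\mathfrak{s}$, and since $L$ is isometric to a symmetric space, $M$ itself is symmetric, forcing $i_{\mathfrak s}(M)=n$ — a contradiction. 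So one may assume $\eta\neq 0$.

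The delicate step is to rule out the case $\eta\neq 0$, and this is where the Kostant identification enters: Killing fields on $M$ are in bijection with sections of $E = TM\oplus\Lambda^2 TM$ parallel under a natural connection $D$, each $Z$ corresponding to $(Z(p),(\nabla Z)_p)\in T_pM\oplus\mathfrak{so}(T_pM)$. The invariance relation $\nabla_Z\xi=\nabla_\xi Z$ then becomes the linear constraint $(\nabla Z)_p\,\xi_p=\langle Z(p),\xi_p\rangle\,\eta_p$ on the fibre $K_p\subset E_p$. Combined with the induced Lie bracket on $K_p$ (which recovers the commutator of Killing fields and involves the curvature tensor) and the decomposition $\mathfrak{iso}(M)=\mathbb R X_0\oplus\mathfrak m\oplus\mathfrak h$ — where $\mathfrak m$ is the $(n-1)$-dimensional space of Killing fields realizing $\mathfrak{s}_p$ with vanishing covariant derivative at $p$, $X_0$ is a Killing field with $X_0(p)=\xi_p$, and $\mathfrak h$ is the isotropy — this constraint forces $\mathfrak h$ to fix $\eta_p$ as well as $\xi_p$, and yields curvature identities such as $R(v,w)\xi_p=0$ and $R(\xi_p,v)\xi_p=\pm\langle v,\eta_p\rangle\,\eta_p$ for $v,w\in\mathfrak{s}_p$. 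The main obstacle is the final algebraic manipulation: converting these structural and curvature constraints into an outright contradiction with $\eta_p\neq 0$. I expect this to come out of Jacobi-identity computations in $\mathfrak{iso}(M)$ for carefully chosen triples — for instance $X_0$ together with Killing fields in $\mathfrak m$ realizing $\eta_p$ and a unit vector orthogonal to $\eta_p$ in $\mathfrak{s}_p$ — collapsing, via the Kostant bracket formula, to $\eta_p=0$ and thereby reducing to the already-disposed case.
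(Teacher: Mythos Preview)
Your setup through the case $\eta=0$ is correct: the unit section $\xi$ of $\mathfrak s^\perp$ exists, is $\Iso_0(M)$-invariant, satisfies $\nabla_X\xi=0$ for $X\in\mathfrak s$, and if $\eta=\nabla_\xi\xi$ vanishes then de~Rham splits $M$ as $L\times\mathbb R$ with $L$ symmetric.

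The genuine gap is the case $\eta\neq 0$. You describe the Kostant framework and list some curvature identities (e.g.\ $R(v,w)\xi_p=0$ for $v,w\in\mathfrak s_p$, which is indeed forced by total geodesy of the leaves), but you do not carry out any computation leading to $\eta_p=0$; you write only that you ``expect this to come out of Jacobi-identity computations.'' That is precisely the heart of the theorem, and nothing you have written so far forces it. In fact the paper does \emph{not} prove $\eta=0$; it avoids that dichotomy altogether.

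The paper's route is as follows. Using the flat $\bar\nabla$-parallel subbundle $\mathfrak s\oplus\so(\mathfrak s)$ of $TM\oplus\so(TM)$, one produces a genuine \emph{Killing} field $Z$ with $Z(p)\perp\mathfrak s_p$ and $(\nabla Z)_p=-R_{v,X(p)}$; one then shows $Z$ is everywhere orthogonal to $\mathfrak s$ and is unique up to a scalar. The key step is to form the $2$-dimensional, isometry-invariant distribution $\mathfrak d=\sspan\{Z,\nabla_ZZ\}$ (note $\nabla_ZZ\in\mathfrak s$, so $\mathfrak d\cap\mathfrak s=\mathbb R\,\nabla_ZZ$) and prove that $\mathfrak d$ is \emph{parallel}: one checks $\mathfrak d^\perp\subset\mathfrak s$ is autoparallel, and that $\mathfrak d$ itself is autoparallel using $\nabla_{Y_2}Y_2=0$, $\nabla_{Y_2}Y_1=0$ (unit normal to a totally geodesic hypersurface), and $[Z,Y_2]=0$ (isometry-invariance of $\mathfrak d\cap\mathfrak s$). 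Then de~Rham splits $M$ as a $2$-dimensional homogeneous surface times an $(n-2)$-dimensional factor lying inside the symmetric leaves, so $M$ is symmetric --- contradiction. In other words, rather than trying to kill your $\eta$, the paper \emph{incorporates} the direction $\nabla_ZZ$ (your $\eta$, up to normalization) into a parallel rank-two subbundle. That geometric idea is what your proposal is missing.
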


The proof of this theorem requires a general framework to deal with Killing fields.

\begin{remark}\label{killing-fields}
  Killing fields on a Riemannian manifold $M$ can be regarded as parallel sections of the canonical associated vector bundle $TM \oplus \Lambda^2(TM) \simeq TM \oplus \so(TM)$, with respect to the connection $\bar\nabla$ on $TM \oplus \so(TM)$ given by
  \[
  \bar\nabla_X(Y, B) = (\nabla_XY - BX, \nabla_XB - R_{X, Y}),
  \]
  where $Y$ is a tangent field on $M$, $B$ is a skew-symmetric tensor on $M$ of type $(1, 1)$ and $\nabla$ is the Levi-Civita connection on $M$. In fact, if $(Y, B)$ is a parallel section, then $\nabla Y = B$ and so $Y$ is a Killing field on $M$. Conversely, if $Y$ is a Killing field on $M$, then its canonical lift $(Y, \nabla Y)$ is a parallel section of $TM \oplus \so(TM)$, since the derivative of $Y$ satisfies the so-called affine Killing equation $\nabla_X(\nabla Y) = R_{X, Y}$ (see for instance \cite{console-olmos-2008}).
\end{remark}

\begin{lemma}\label{bracket-killing-fields}
  Let $X, X'$ be two Killing fields on $M$ with initial conditions at $p \in M$ given by $(v, B)$, $(v', B') \in T_pM \oplus \so(T_pM)$ respectively. Then $[X, X']$ has initial conditions
  \[
  (B'v - Bv', R_{v, v'} - [B, B']).
  \]
\end{lemma}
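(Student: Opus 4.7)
\medskip

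The plan is to compute the two initial conditions separately, using only the definition of the Lie bracket of vector fields together with the affine Killing equation recorded in Remark~\ref{killing-fields}.

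For the first coordinate, I would write $[X,X'] = \nabla_X X' - \nabla_{X'} X$ and evaluate at $p$. Since $X$ is Killing with $(\nabla X)_p = B$, one has $\nabla_{X'_p} X = B X'_p = B v'$, and symmetrically $\nabla_{X_p} X' = B' v$, which immediately gives $[X,X']_p = B'v - Bv'$.

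For the second coordinate, I would compute $\nabla_w[X,X']$ at $p$ for an arbitrary $w \in T_pM$. The key tool is the affine Killing identity $\nabla_Y(\nabla Z) = R_{Y,Z}$ for any Killing field $Z$, which, read as a relation between $(1,1)$-tensors applied to a vector, gives the second-derivative identity
\[
\nabla_w \nabla_U Z \;=\; \nabla_{\nabla_w U} Z + R(w,Z)U.
\]
Applying this once with $Z = X'$, $U = X$ and once with $Z = X$, $U = X'$, and using $(\nabla X)_p = B$, $(\nabla X')_p = B'$, yields
\[
\nabla_w \nabla_X X'\big|_p = B'B\,w + R(w,v')v, \qquad \nabla_w \nabla_{X'} X\big|_p = BB'\,w + R(w,v)v'.
\]
Subtracting, $(\nabla_w[X,X'])_p = -[B,B']w + R(w,v')v - R(w,v)v'$.

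It then remains to identify $R(w,v')v - R(w,v)v'$ with $R(v,v')w$, which is exactly the first Bianchi identity applied to the triple $(w,v',v)$. Once that rearrangement is made, the second coordinate becomes $(R_{v,v'} - [B,B'])w$, which is the claimed formula. The only step that requires any real care is keeping the signs straight in the affine Killing identity and in the Bianchi rearrangement; everything else is bookkeeping.
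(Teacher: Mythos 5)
Your proposal is correct and follows essentially the same route as the paper: the first coordinate from the definition of the bracket, and the second from the product rule on $\nabla_U Z = (\nabla Z)(U)$ combined with the affine Killing equation $\nabla_w(\nabla Z) = R_{w,Z}$ and a final rearrangement via the first Bianchi identity. The signs all check out, so nothing further is needed.
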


\begin{proof}
  Since $B = (\nabla X)_p$ and $B' = (\nabla X')_p$, the first initial condition holds trivially. For the second one, we use the affine Killing equation (see Remark \ref{killing-fields}) and Bianchi first identity. In fact, if $w \in T_pM$ then
  \begin{align*}
    \nabla_w[X, X'] & = \nabla_w(\nabla_XX' - \nabla_{X'}X) = \nabla_w(\nabla_XX') - \nabla_w(\nabla_{X'}X) \\
    & = (\nabla_w(\nabla X'))_v + \nabla_{\nabla_w X}X' - (\nabla_w(\nabla X))_{v'} - \nabla_{\nabla_wX'}X \\
    & = R_{w, v'}v + B'Bw - R_{w, v}v' - BB'w \\
    & = R_{v, v'}w - [B, B']w = (R_{v, v'} - [B, B'])w. \qedhere
  \end{align*}
\end{proof}

\begin{proof}
  [{\proofname} of Theorem \ref{sec:ind=1}]
  Assume that the distribution of symmetry of $M$ has dimension $\dim \mathfrak s = n - 1$. So, given $p \in M$, we can choose $v \in T_pM$ and a vector field $X$ on $M$ which is always tangent to $\mathfrak s$ and such that $\nabla_vX \notin \mathfrak s_p$. Recall that $(X, 0)$ is a section of the $\bar\nabla$-parallel and flat subbundle $\mathfrak s \oplus \so(\mathfrak s)$ of the canonical bundle $TM \oplus \so(TM)$. Since
  \[
  \bar\nabla_v(X, 0) = (\nabla_vX, -R_{v, X(p)}) \in \mathfrak s_p \oplus \so(\mathfrak s_p),
  \]
  there exists a Killing field $Z$ on $M$ with initial conditions $Z(p) = \nabla_vX$ and $(\nabla Z)_p = -R_{v, X(p)}$. Moreover, we can assume that $Z(p)$ is orthogonal to $\mathfrak s_p$. In fact, let $w$ be the orthogonal projection of $\nabla_vX$ to $\mathfrak s_p$ and subtract to $Z$ the Killing field with initial conditions $(w, 0)$. We can also assume that $v$ is orthogonal to $\mathfrak s_p$, otherwise we subs-tract to $Z$ the Killing field with initial conditions $(0, R_{u, X(p)})$, where $u$ is orthogonal projection of $v$ to $\mathfrak s_p$. (Such a Killing field exists, since from Lemma~\ref{bracket-killing-fields}, the bracket $[U, X']$ of the Killing fields $U$ and $X'$ with initial conditions $(u, 0)$ and $(X(p), 0)$ respectively, has this property.)

  We have that $Z$ is always perpendicular to the leaf of symmetry $L(p)$ by $p$. In fact, let $\bar Z$ be the projection to $L(p)$ of the restriction of $Z$ to $L(p)$. Then $\bar Z$ is a Killing field of $L(p)$ with initial conditions $(0, (\nabla\bar Z)_p)$. Since $L(p)$ is totally geodesic, we have that
  \[
  \langle(\nabla\bar Z)_pv_1, v_2\rangle = - \langle R_{v, X(p)}v_1, v_2\rangle = 0
  \]
  for all $v_1, v_2 \in T_pL(p) = \mathfrak s_p$. So, the initial conditions of $\bar Z$ both vanish and $\bar Z = 0$. Therefore $Z$ is always perpendicular to $L(p)$ and moreover, $Z$ must be always perpendicular to the distribution of symmetry $\mathfrak s$. If fact, $\mathfrak s$ is invariant by isometries and the flow of $Z$ applied to $L(p)$ is open, since $\dim \mathfrak s = n - 1$.

  We also observe that the Killing field $Z$, orthogonal to the distributions of symmetry, is unique up to a constant multiple. In fact, let $Z'$ be a Killing field which is always perpendicular to $\mathfrak s$ and such that $Z'(p) = 0$. Let $\Phi_t'$ be the flow of $Z'$. Since $\Phi_t'(p) = p$, we have that $\Phi_t'(L(p)) = L(p)$, since $\Phi_t'$ preserves the foliation of symmetry. So, $Z'$ is always tangent to $L(p)$ and thus $Z'|_{L(p)} = 0$. Then $(\nabla Z')_p|_{\mathfrak s_p} = 0$ and, since $\mathfrak s_p$ has codimension $1$, it follows that $(\nabla Z')_p = 0$. This shows that $Z' = 0$, and so $Z$ is unique up to a constant multiple.

  It follows from the Killing equation that $\langle\nabla_ZZ, Z\rangle = 0$, and so $\nabla_ZZ$ lies in the distribution of symmetry. Let $\mathfrak d$ be the $2$-dimensional distribution spanned by $Z$ and $\nabla_ZZ$. We will show that $\mathfrak d$ is a parallel distribution. Using this fact, we conclude that $M$ splits as a $2$-dimensional symmetric by a $(n - 1)$-symmetric space, since $\mathfrak d \cap \mathfrak s = \mathbb R\nabla_ZZ$ is a parallel distribution when restricted to $L(p)$, and this is a contradiction.

  In order to show that $\mathfrak d$ is a parallel distribution, we note that $\mathfrak d$ is invariant under isometries, since $Z$ is unique up to a scalar multiple. This implies that the $1$-dimensional distribution $\mathfrak d \cap \mathfrak s$ is parallel along any leaf of symmetry $L(q)$, $q \in M$. In fact, if $U$ is a Killing field with initial conditions at $q$, given by $(u, 0)$ with $u \in \mathfrak s_q$, then $[U, \nabla_ZZ]$ must be a (functional) multiple of $\nabla_ZZ$, and therefore $\mathfrak d \cap \mathfrak s$ is parallel along $L(q)$. Since $L(q)$ is totally geodesic and $\mathfrak d^\bot \subset \mathfrak s$, we conclude that $\mathfrak d^\bot$ is autoparallel. Let us prove now that $\mathfrak d$ is also autoparallel. Normalize $Z$ and $\nabla_ZZ$ in order to get two unit vector fields $Y_1, Y_2$ spanning $\mathfrak d$. So, $Y_1$ is orthogonal to $\mathfrak s$ and $Y_2$ lies on $\mathfrak s$. Moreover, the integral lines of $Y_2$ are geodesics (always tangent to the distribution of symmetry), and thus $\nabla_{Y_2}Y_2 = 0$. Since $\nabla_ZZ$ is proportional to $Y_2$, we also have that $\nabla_{Y_1}Y_1$ lies in $\mathfrak d$. Observe now that $Y_1$ is a unit vector field normal to any totally geodesic hypersurface $L(q)$. So, $Y_1$ must be parallel along any geodesic of $L(q)$. In particular, $\nabla_{Y_2}Y_1 = 0$. This implies that $\nabla_{Y_1}Y_2 = [Y_1, Y_2]$, which lies in $\mathfrak d$. In fact, $[Z, Y_2] = 0$ since $\mathbb RY_2 = \mathfrak d \cap \mathfrak s$ is invariant under isometries. So $\mathfrak d$ and $\mathfrak d^\bot$ are both autoparallel and this implies that $\mathfrak d$ is a parallel distribution.
\end{proof}

\begin{remark}
  We will use repeatedly the well-known Koszul formula in Killing fields (see for instance \cite{berndt-olmos-reggiani-2015}): if $M$ is a Riemannian manifold and $X^*, Y^*, Z^*$ are Killing fields on $M$, then
\begin{equation}\label{eq:koszul-right-invariant}
  \langle\nabla_{X^*}Y^*, Z^*\rangle = \frac{1}{2}\left(\langle[X^*, Y^*], Z^*\rangle + \langle[X^*, Z^*], Y^*\rangle + \langle[Y^*, Z^*], X^*\rangle\right).
\end{equation}

In the case of a Lie group it is also useful the Koszul formula on left-invariant fields: if $G$ is a Lie group endowed with a left-invariant metric and $X, Y, Z$ are left-invariant fields on $G$, then
\begin{equation}\label{eq:koszul-left-invariant}
  \langle\nabla_XY, Z\rangle = \frac{1}{2}\left(\langle[X, Y], Z\rangle - \langle[X, Z], Y\rangle - \langle[Y, Z], X\rangle\right).
\end{equation}
\end{remark}

\section{The case of the special unitary group $\SU(2)$}

Let us consider for the special unitary Lie group $\SU(2)$, the basis of its Lie algebra $\su(2) = \{A \in \gl(2, \mathbb C): A + A^* = 0\}$ given by
\begin{align*}
  X_1 = \frac{1}{2}
  \begin{pmatrix}
    i & 0 \\
    0 & -i
  \end{pmatrix},
  & & X_2 = \frac{1}{2}
  \begin{pmatrix}
    0 & -1 \\
    1 & 0
  \end{pmatrix},
  & & X_3 = \frac{1}{2}
  \begin{pmatrix}
    0 & -i \\
    -i & 0
  \end{pmatrix},
\end{align*}
where $i = \sqrt{-1}$. Any left-invariant metric on $\SU(2)$ is identified with a scalar product on $\su(2)$. Moreover, it follows from \cite{ha-lee-2009} that any left-invariant metric is automorphically isometric to one of the form $g_{\lambda, \mu, \nu}$, which is represented in the basis $X_1, X_2, X_3$ by the symmetric positive-definite matrix
$$
\begin{pmatrix}
  \lambda & 0 & 0 \\
  0 & \mu & 0 \\
  0 & 0 & \nu
\end{pmatrix}
$$
where $\lambda \ge \mu \ge \nu > 0$. In particular, such a metric is bi-invariant if and only if $\lambda = \mu = \nu$. In \cite{berndt-olmos-reggiani-2015}, the left-invariant metrics on $\SU(2)$ with non-trivial index of symmetry are computed. Moreover, this family classifies all the simply connected, compact, homogeneous manifolds with co-index of symmetry equals to~$2$.

\begin{theorem}
  [See \cite{berndt-olmos-reggiani-2015}]
  Let $g$ be a left-invariant metric on $\SU(2)$. Then
  \begin{enumerate}
  \item $i_{\mathfrak s}(\SU(2), g) = 3$ if and only if $g$ is bi-invariant, and in this case $(\SU(2), g)$ is isometric to a round sphere;
  \item $i_{\mathfrak s}(\SU(2), g) = 1$ if and only if, up to isometry and scaling, $g = g_{\lambda, \lambda - 1, 1}$ with $\lambda > 2$, $g = g_{\lambda, 1, 1}$ with $\lambda > 1$, or $g = g_{1, 1, \nu}$, with $0 < \nu < 1$.
  \end{enumerate}
\end{theorem}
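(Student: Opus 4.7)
The plan is to combine Theorem~\ref{sec:ind=1} with an explicit computation of the initial data $(X(e), (\nabla X)_e)$ of Killing fields via the Koszul identity \eqref{eq:koszul-right-invariant}. Since $\dim\SU(2) = 3$, Theorem~\ref{sec:ind=1} immediately rules out $i_{\mathfrak s} = 2$, so $i_{\mathfrak s}(\SU(2), g) \in \{0, 1, 3\}$. The extremal case $i_{\mathfrak s} = 3$ is the assertion that $(\SU(2), g)$ is locally symmetric; a classical curvature computation shows that this forces $g$ to be bi-invariant, whence $(\SU(2), g)$ is the round $3$-sphere, proving (1).

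For (2), I would work in the orthonormal basis $e_i = X_i/\sqrt{\lambda_i}$ with $(\lambda_1, \lambda_2, \lambda_3) = (\lambda, \mu, \nu)$, and compute $(\nabla E_i^*)_e$ for each of the three right-invariant Killing fields $E_i^*$ with $E_i^*(e) = e_i$. Using \eqref{eq:koszul-right-invariant} together with the structure constants of $\su(2)$, a short calculation shows that $(\nabla E_i^*)_e$ is a scalar multiple of the infinitesimal rotation in the plane orthogonal to $e_i$, with scalar proportional to $\lambda_j + \lambda_k - \lambda_i$ (where $\{i,j,k\} = \{1,2,3\}$). Under the normalization $\lambda \ge \mu \ge \nu > 0$, the factors $\lambda + \mu - \nu$ and $\lambda + \nu - \mu$ are strictly positive, so only $\mu + \nu - \lambda$ can vanish.

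Next I would case-split on the coincidence pattern of $(\lambda, \mu, \nu)$. If all three eigenvalues are distinct, then by \cite{ha-lee-2009} the isometry group reduces to $\SU(2)$, so every Killing field is right-invariant, and the existence of a nonzero parallel Killing field at $e$ is equivalent to $\lambda = \mu + \nu$; rescaling $\nu = 1$ yields $g_{\lambda, \lambda - 1, 1}$ with $\lambda > 2$. If exactly two eigenvalues coincide, the isometry group acquires an extra $SO(2)$-isotropy rotating the corresponding coordinate plane, whose Killing generator $R$ satisfies $R(e) = 0$ with $(\nabla R)_e$ equal to that plane rotation. Then the unique right-invariant field whose derivative at $e$ lies in the same plane --- namely $E_i^*$ where $e_i$ is the ``odd'' axis --- can be corrected: for a suitable $\alpha$, the field $X = E_i^* - \alpha R$ satisfies $(\nabla X)_e = 0$ and $X(e) = e_i \ne 0$. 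This works for every non-bi-invariant metric of this type, producing $g_{\lambda, 1, 1}$ with $\lambda > 1$ and $g_{1, 1, \nu}$ with $0 < \nu < 1$. In all remaining cases no parallel Killing field exists, so $i_{\mathfrak s} = 0$.

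The main obstacle is the two-coincident-eigenvalue case, where one must verify that only the right-invariant generator in the distinguished direction can be modified by a multiple of the isotropy rotation: the other two right-invariant generators contribute derivatives in planes orthogonal to the one rotated by $R$, so no cancellation of $(\nabla E_j^*)_e$ is available for $j \ne i$. Once this bookkeeping is carried out and complemented with Theorem~\ref{sec:ind=1} to forbid $i_{\mathfrak s} = 2$, the three families in~(2) emerge unambiguously, and all other metrics are pinned down to have $i_{\mathfrak s} = 0$.
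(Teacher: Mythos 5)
Your argument is correct in substance, but note that the paper itself does not prove this theorem: it is imported from \cite{berndt-olmos-reggiani-2015}, where it arises inside the classification of compact homogeneous spaces with co-index of symmetry two. What you propose is, in effect, the very computational scheme that the paper carries out explicitly for $\SL(2,\mathbb R)$, $\tilde E(2)$, $E(1,1)$ and $H_1$: exclude index $2$ by Theorem~\ref{sec:ind=1}, use the normal form $g_{\lambda,\mu,\nu}$ together with the dimension of the isometry group to write a general Killing field as a right-invariant field plus (possibly) an isotropy generator, and impose $(\nabla Y^*)_e=0$ via the Koszul formula \eqref{eq:koszul-right-invariant}. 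Your key computation is right: $(\nabla X_i^*)_e$ is the infinitesimal rotation of the plane orthogonal to $e_i$ scaled by $\lambda_j+\lambda_k-\lambda_i$, so under $\lambda\ge\mu\ge\nu$ only $\mu+\nu-\lambda$ can vanish; this yields $g_{\lambda,\lambda-1,1}$ with $\lambda>2$ when the eigenvalues are distinct, while the two-coincident-eigenvalue case is resolved by correcting the right-invariant field along the odd axis with a multiple of the isotropy rotation, exactly as the paper does for $g_{\lambda,\mu,\mu}$ on $\SL(2,\mathbb R)$ and for $H_1$ (and your observation that the other two generators produce rotations in planes not reachable by the isotropy is the right way to close that case). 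Two small points to tighten: the facts that $\dim\Iso=3$ for distinct eigenvalues and $\dim\Iso=4$ when exactly two coincide come from \cite{ha-lee-2012} rather than \cite{ha-lee-2009}, which only classifies the metrics up to isometric automorphism; and in part (1) the implication ``symmetric $\Rightarrow$ bi-invariant'' deserves a word, e.g.\ a simply connected $3$-dimensional symmetric space diffeomorphic to $\mathbb S^3$ must have constant curvature, and Milnor's curvature formulas show that $g_{\lambda,\mu,\nu}$ has constant sectional curvature only when $\lambda=\mu=\nu$.
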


\begin{remark}
  Recall that from \cite{berndt-olmos-reggiani-2015} (see also \cite{ha-lee-2012}),  all the isometry groups of the metrics of the family $g_{\lambda, \lambda - 1, 1}$ have dimension $3$. So, the symmetric subspace (inside the isometry algebra) for these metrics is spanned for a right-invariant vector field. In contrast, the metrics $g_{\lambda, 1, 1}$ and $g_{1, 1, \nu}$ have all $4$-dimensional isometry group, and the symmetric subspaces are never spanned by right-invariant Killing fields except for $g_{2, 1, 1}$, which correspond to unit tangent bundle of the $2$-dimensional sphere of curvature $2$ with the Sasaki metric (see \cite{olmos-reggiani-tamaru-2014}).
\end{remark}

\section{The case of special linear group $\SL(2, \mathbb R)$}\label{sec:case-sl2}

Since the distribution of symmetry lifts to the universal covering, we can perform our calculations on $\SL(2, \mathbb R)$ instead of $\widetilde\SL(2, \mathbb R)$. Let us consider the basis of the Lie algebra $\ssl(2, \mathbb R) = \{A \in \gl(2, \mathbb R): A + A^t = 0\}$ given by 
\begin{align*}
  X_1 = 
  \begin{pmatrix}
    0 & 1 \\
    -1 & 0
  \end{pmatrix}, & & 
  X_2 = 
  \begin{pmatrix}
    0 & 1 \\
    1 & 0
  \end{pmatrix}, & & 
  X_3 = 
  \begin{pmatrix}
    1 & 0 \\
    0 & -1
  \end{pmatrix},
\end{align*}
and note that $[X_1, X_2] = 2X_3$, $[X_1, X_3] = -2X_2$ and $[X_2, X_3] = -2X_1$. We have from \cite{ha-lee-2009} that any left-invariant metric on $\SL(2, \mathbb R)$ is represented, up to isometric automorphism, in the basis $X_1, X_2, X_3$ by the symmetric positive-definite matrix
$$
\begin{pmatrix}
  \lambda & 0 & 0 \\
  0 & \mu & 0 \\
  0 & 0 & \nu
\end{pmatrix},
$$
where, $\lambda > 0$ and  $\mu \ge \nu > 0$. Let us denote by $X_1^*, X_2^*, X_3^*$ the right-invariant vector fields with the same initial conditions as $X_1, X_2, X_3$. We consider separately two different geometric cases.

\subsection{Case $\mu > \nu$}

Here, by the results in \cite{ha-lee-2012}, the group of isometries of $g_{\lambda, \mu, \nu}$ has dimension $3$, and so any Killing field is right-invariant. We are looking for a nontrivial Killing field
$$Y^* = aX_1^* + bX_2^* + cX_3^*$$
such that $(\nabla Y^*)_e = 0$. Taking into account that $[X_i^*, X_j^*] = -[X_i, X_j]^*$ we get that
\begin{align}\label{eq:X_iY-SL2}
  [X_1^*, Y^*] & = 2(cX_2^* - bX_3^*), \notag \\ 
  [X_2^*, Y^*] & = 2(cX_1^* + aX_3^*), \\
  [X_3^*, Y^*] & = -2(bX_1^* + aX_2^*). \notag
\end{align}
We denote, for the sake of simplicity, $g_{\lambda, \mu, \nu} = \langle\cdot, \cdot\rangle$. Then, the condition $(\nabla Y^*)_e = 0$ is equivalent to $\langle\nabla_{X_i^*}Y^*, X_j^*\rangle_e = 0$ for $i < j$. By making use of (\ref{eq:koszul-right-invariant}) and (\ref{eq:X_iY-SL2}), we obtain
\begin{align*}
  0 & = 2\langle\nabla_{X_1^*}Y^*, X_2^*\rangle_e \\
  & = 2c\langle X_2^*, X_2^*\rangle_e - 2c\langle X_3^*, X_3^*\rangle_e - 2c\langle X_1^*, X_1^*\rangle_e \\
  & = 2c(-\lambda + \mu - \nu).
\end{align*}
Similarly,
\begin{align*}
  0 & = 2\langle\nabla_{X_1^*}Y^*, X_3^*\rangle_e \\
  & = -2b\langle X_3^*, X_3^*\rangle_e + 2b\langle X_2^*, X_2^*\rangle_e + 2b\langle X_1^*, X_1^*\rangle_e \\
  & = 2b(\lambda + \mu - \nu)
\end{align*}
and
\begin{align*}
  0 & = 2\langle\nabla_{X_2^*}Y^*, X_3^*\rangle_e \\
  & = 2a\langle X_3^*, X_3^*\rangle_e + 2a\langle X_1^*, X_1^*\rangle_e + 2a\langle X_1^*, X_1^*\rangle_e \\
  & = 2a(\lambda + \mu + \nu)
\end{align*}
So, $(\nabla Y^*)_e = 0$ if and only if
\begin{equation}\label{eq:1}
  2c(-\lambda + \mu - \nu) = 2b(\lambda + \mu - \nu) = 2a(\lambda + \mu + \nu) = 0.
\end{equation}
Since $\mu > 0$, it must be $a = b = 0$ and we have the non-trivial solution $Y^* = cX_3^*$, $c \neq 0$, if and only if $\lambda = \mu - \nu$.

\subsection{Case $\mu = \nu$}

Recall that from \cite{ha-lee-2012}, the dimension of the full isometry group of $g_{\lambda, \mu, \mu}$ is $4$. Moreover, the Lie algebra of $\Iso(\SL(2, \mathbb R), g_{\lambda, \mu, \mu})$ is $\ssl(2, \mathbb R) \rtimes \so(2)$, where the isotropy algebra $\so(2)$ acts only on $\sspan_{\mathbb R}\{X_2^*, X_3^*\}$. So, there exists a Killing field $X_4^*$ (the same for all $\lambda, \mu)$ such that $X_4^*(e) = 0$ and
\begin{align*}
  [X_1^*, X_4^*] = 0, & & [X_2^*, X_4^*] = -X_3^*, & & [X_3^*, X_4^*] = X_2^*.
\end{align*}

Let us denote, as in the previous subsection, $g_{\lambda, \mu, \mu} = \langle\cdot, \cdot\rangle$. Now, we are looking for a Killing field
$$Y^* = aX_1^* + bX_2^* + cX_3^* + dX_4^*$$
such that $(\nabla Y^*)_e = 0$. Recall that with same calculations as in (\ref{eq:1}), we have that $d \neq 0$ if $Y^*$ is nontrivial. Moreover, since $X_4(e) = 0$, we have $(\nabla_{X_i^*}X_4)_e = [X_i^*, X_4^*]_e$. If we call $\tilde Y^* = Y^* - dX_4^*$, then $\tilde Y^*$ is right-invariant, and the formulas for $\langle\nabla_{X_i^*}\tilde Y^*, X_j^*\rangle_e$, $1 \le i < j \le 3$, are the same as in the previous subsection taking $\mu = \nu$. So, the equations $\langle\nabla_{X_i^*}Y^*, X_j^*\rangle_e = 0$ become
\begin{align*}
  0 & = 2\langle\nabla_{X_1^*}Y^*, X_2^*\rangle_e = -2c\lambda \\
  0 & = 2\langle\nabla_{X_1^*}Y^*, X_3^*\rangle_e = 2b\lambda \\
  0 & = 2\langle\nabla_{X_2^*}Y^*, X_3^*\rangle_e = 2a(\lambda + 2\mu) -2d\mu.
\end{align*}
Thus, $b = c = 0$ and we have a nontrivial solution with $d = a(\lambda + 2\mu)/\mu$. In particular, if we take $a = 1$, then $Y^* = X_1^* + (\frac{\lambda}{\mu} + 2)X_4^*$.

We can summarize the results of this section with the following theorem.

\begin{theorem}
  Let $g$ be a left-invariant metric on $\SL(2, \mathbb R)$ such that $i_{\mathfrak s}(\SL(2, \mathbb R), g)$ is nontrivial. Then, up to isometry and scaling, $g = g_{\lambda, \lambda + 1, 1}$ or $g = g_{\lambda, 1, 1}$, where $\lambda > 0$.
\end{theorem}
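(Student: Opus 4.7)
The plan is to assemble the two case analyses that have just been carried out in Sections 4.1--4.2 into a single statement, after reducing an arbitrary left-invariant metric on $\SL(2,\mathbb R)$ to diagonal form. By the classification of Ha--Lee already invoked, up to isometric automorphism and an overall scaling we may assume $g = g_{\lambda,\mu,\nu}$ with $\lambda > 0$ and $\mu \ge \nu > 0$, and normalize $\nu = 1$. The natural dichotomy $\mu > \nu$ versus $\mu = \nu$ corresponds exactly to the isometry group having dimension $3$ or $4$, which is what controls the space of available Killing fields.

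In the case $\mu > \nu$, every Killing field is right-invariant, so I would parametrize the unknown field as $Y^* = aX_1^* + bX_2^* + cX_3^*$ and impose the three equations $\langle \nabla_{X_i^*} Y^*, X_j^*\rangle_e = 0$, $i<j$, using the Koszul formula \eqref{eq:koszul-right-invariant} and the brackets \eqref{eq:X_iY-SL2}. This reduces precisely to the system \eqref{eq:1}: the coefficients $\lambda + \mu + \nu$ and $\lambda + \mu - \nu$ are strictly positive under our hypotheses, forcing $a = b = 0$, while the third coefficient $-\lambda + \mu - \nu$ can vanish, and when it does we obtain the one-parameter family of Killing fields $Y^* = cX_3^*$. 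Solving $\lambda = \mu - \nu$ with $\nu = 1$ produces exactly the family $g_{\lambda,\lambda+1,1}$ (and the requirement $\lambda > 0$ is automatic from $\mu > \nu$).

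In the case $\mu = \nu$, one must enlarge the search to include the isotropy Killing field $X_4^*$ and write $Y^* = aX_1^* + bX_2^* + cX_3^* + dX_4^*$. Because $X_4^*(e) = 0$, the field $\tilde Y^* = Y^* - dX_4^*$ is right-invariant and its contribution to $\langle \nabla_{X_i^*} Y^*, X_j^*\rangle_e$ is computed by the same Koszul expressions as before with $\mu = \nu$; the extra term coming from $X_4^*$ enters only through $(\nabla_{X_i^*}X_4^*)_e = [X_i^*, X_4^*]_e$. Putting this together yields $b = c = 0$ and the single relation $d = a(\lambda + 2\mu)/\mu$, which is consistent for every $\lambda, \mu > 0$. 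After rescaling $\mu = 1$ this gives the family $g_{\lambda,1,1}$, and the explicit Killing field $X_1^* + (\lambda + 2)X_4^*$ witnesses that $i_{\mathfrak s}$ is nontrivial on the entire family.

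I expect no serious obstacle in carrying out the argument, because the bulk of the computation has already been done in the two subsections; the proof is essentially a bookkeeping step. The only point that deserves a moment's care is confirming that the positivity hypotheses $\lambda, \mu > 0$ (and $\mu \ge \nu > 0$) rule out every coefficient in system \eqref{eq:1} except $-\lambda + \mu - \nu$, and that in the $\mu = \nu$ case the extra coefficient $\lambda + 2\mu$ is strictly positive so that the relation $d = a(\lambda+2\mu)/\mu$ genuinely yields a nontrivial Killing field for every admissible $(\lambda,\mu)$.
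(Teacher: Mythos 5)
Your proposal is correct and follows essentially the same route as the paper: the theorem is precisely the summary of the two subsection computations, with the dichotomy $\mu>\nu$ (isometry group of dimension $3$, system \eqref{eq:1} forcing $a=b=0$ and $\lambda=\mu-\nu$) versus $\mu=\nu$ (dimension $4$, the extra isotropy field $X_4^*$ yielding $b=c=0$ and $d=a(\lambda+2\mu)/\mu$), followed by the normalization $\nu=1$. No gaps; the bookkeeping points you flag (positivity of $\lambda+\mu\pm\nu$ and of $\lambda+2\mu$) are exactly the ones the paper relies on.
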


\section{The case of the Heisenberg Lie group $H_1$}

Let us consider the $3$-dimensional Heisenberg Lie group modelled as the differentiable manifold $H_1 = \mathbb R^3$ with the group structure
$$(x, y, z)(x', y', z') = (x + x', y + y', z + z' + \tfrac{1}{2}(xy' - yx')).$$
Let us denote the identity element of $H_1$ by $e = (0, 0, 0)$. Recall that a basis of left-invariant vector field is given by
\begin{align}\label{eq:orthonormal-basis-H1}
  X_1 = \frac{\partial}{\partial x} - \frac{y}{2}\frac{\partial}{\partial z}, & & X_2 = \frac{\partial}{\partial y} + \frac{x}{2}\frac{\partial}{\partial z}, & & X_3 = \frac{\partial}{\partial z}
\end{align}
in canonical coordinates $(x, y, z)$. Notice that $X_3$ spans the center of the Lie algebra $\mathfrak h_1$ of $H_1$ and $[X_1, X_2] = X_3$. It is well-known that there exists only one left-invariant metric on $H_1$, up to isometry and scaling (see Remark \ref{sec:left-invariant-metric-H1}). So, in order to compute the index of symmetry of $H_1$ with respect to a left-invariant metric, we can assume that it is endowed with left-invariant metric $\langle\cdot, \cdot\rangle$ that makes (\ref{eq:orthonormal-basis-H1}) an orthonormal basis.

Notice that, since $H_1$ is not a symmetric space, according to Theorem \ref{sec:ind=1}, its index of symmetry must be $0$ or $1$. So we look for the solutions of $(\nabla Y^*)_e = 0$, where $Y^*$ is a Killing vector field on $H_1$. It is well-known that the full isometry group of $H_1$ has $1$-dimensional isotropy group. Moreover, the Lie algebra of the isotropy group is generated for the Killing field
$$X_4^* = -yX_1 + xX_2 - \frac{x^2 + y^2}{2}X_3,$$
and so any Killing vector field can be written as the sum of a right-invariant vector field and a constant multiple of $X_4^*$. Let us consider the basis of right-invariant vector fields $X_1^*, X_2^*, X_3^*$ given by
\begin{align*}
  X_1^* = X_1 + yX_3, & & X_2^* = X_2 -xX_3, & & X_3^* = X_3.
\end{align*}
Then we can write the Killing vector field $Y^*$ as a linear combination
\begin{equation}\label{eq:generalY*}
  Y^* = aX_1^* + bX_2^* + cX_3^* + d X_4^*
\end{equation}
for some $a, b, c, d, \in \mathbb R$. In order to study the equation $(\nabla Y^*)_e = 0$ it is convenient to use Koszul formula in left-invariant fields. 
It is easy to verify from (\ref{eq:koszul-left-invariant}) that
$$\nabla_{X_1}X_1 = \nabla_{X_2}X_2 = \nabla_{X_3}X_3 = 0,$$
$$\nabla_{X_1}X_2 = \frac{1}{2}X_3 = -\nabla_{X_2}X_1$$
and
\begin{align*}
  \nabla_{X_3}X_1 = \nabla_{X_1}X_3 = -\frac{1}{2}X_2, & & \nabla_{X_3}X_2 = \nabla_{X_2}X_3 = \frac{1}{2}X_1.
\end{align*}

Assume that $Y^*$ as in (\ref{eq:generalY*}) satisfies $(\nabla Y^*)_e = 0$. We have in particular that
$$0 = (\nabla_{X_3}Y^*)_e = a(\nabla_{X_3}X_1^*)_e + b(\nabla_{X_3}X_2^*)_e$$
since also at the origin we have $0 = [X_3, X_4^*]_e = (\nabla_{X_3}X_4^*)_e$. By replacing $X_1^*$ and $X_2^*$ in terms of left-invariant vector fields in the above equation we get that $0 = (b/2)X_1(e) - (a/2)X_2(e)$ and therefore $a = b = 0$. So, equation (\ref{eq:generalY*}) has now the form $Y^* = cX_3^* + dX_4^*$. Note that neither $c$ nor $d$ can be equal to zero, and then we can assume that $c = 1$. Finally, since $(\nabla_{X_1}X_4^*)_e = X_2(e)$ and $(\nabla_{X_2}X_4^*)_e = -X_1(e)$, we have that
$$(\nabla_{X_1}Y^*)_e = (\nabla_{X_2}Y^*)_e = (\nabla_{X_3}Y^*)_e = 0$$
if and only if $d = -1/2$.

\begin{theorem}
  Let $H_1$ be the $3$-dimensional Heisenberg Lie group endowed with the left-invariant metric such that the basis $X_1, X_2, X_3$ of $\mathfrak h_1$ as in (\ref{eq:orthonormal-basis-H1}) is orthonormal. Then $H_1$ has index of symmetry $i_{\mathfrak s}(H_1) = 1$. Moreover, the Killing vector field
  $$Y^* = X_3 - \frac{1}{2}X_4^*$$
  is parallel at the origin, where $X_4^* = -yX_1 + xX_2 -\frac{1}{2}(x^2 + y^2)X_3$.
\end{theorem}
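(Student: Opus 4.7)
The plan is to bound the index of symmetry from above and then produce one explicit Killing field that is parallel at the identity. For the upper bound, first observe that $H_1$ is not a Riemannian symmetric space (a non-abelian nilpotent Lie group cannot be), so $i_{\mathfrak s}(H_1) \ne 3$, and Theorem~\ref{sec:ind=1} rules out $i_{\mathfrak s}(H_1) = 2 = \dim H_1 - 1$. Hence $i_{\mathfrak s}(H_1) \in \{0,1\}$, and the whole theorem reduces to exhibiting a single nonzero Killing field $Y^*$ with $(\nabla Y^*)_e = 0$, since $Y^*(e)$ will give the missing symmetric direction and the particular field displayed in the statement.

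To find $Y^*$, I would use the parameterization set up in the discussion preceding the theorem: the full isometry Lie algebra is four-dimensional, spanned by the three right-invariant fields $X_1^*, X_2^*, X_3^*$ together with the isotropy generator $X_4^* = -yX_1 + xX_2 - \tfrac{1}{2}(x^2 + y^2)X_3$. Every Killing field on $H_1$ is therefore of the form $Y^* = aX_1^* + bX_2^* + cX_3^* + dX_4^*$. Next I would use the Koszul formula~\eqref{eq:koszul-left-invariant} on the orthonormal left-invariant basis to record the connection table $\nabla_{X_i}X_i = 0$, $\nabla_{X_1}X_2 = -\nabla_{X_2}X_1 = \tfrac{1}{2}X_3$, $\nabla_{X_1}X_3 = \nabla_{X_3}X_1 = -\tfrac{1}{2}X_2$ and $\nabla_{X_2}X_3 = \nabla_{X_3}X_2 = \tfrac{1}{2}X_1$. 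The key auxiliary identity is that, because $X_4^*(e) = 0$, one has $(\nabla_{X_i}X_4^*)_e = [X_i, X_4^*]_e$; a direct bracket computation then yields $[X_1, X_4^*]_e = X_2(e)$, $[X_2, X_4^*]_e = -X_1(e)$ and $[X_3, X_4^*]_e = 0$.

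The last step is to impose the three scalar conditions $(\nabla_{X_i} Y^*)_e = 0$ for $i = 1, 2, 3$ and solve the resulting linear system in $(a,b,c,d)$. The equation with $i = 3$ involves only $a$ and $b$ (since $X_3^*$ is central and $[X_3, X_4^*]_e = 0$) and forces $a = b = 0$; the remaining two equations then collapse to a single linear relation between $c$ and $d$, which after the normalization $c = 1$ produces precisely $Y^* = X_3 - \tfrac{1}{2}X_4^*$. Combined with the upper bound from Theorem~\ref{sec:ind=1}, this forces $i_{\mathfrak s}(H_1) = 1$. The main bookkeeping obstacle is translating between the left-invariant frame (in which the Koszul formula is cleanest) and the right-invariant frame (in which Killing fields are most naturally parameterized), together with the careful use of the vanishing-at-$e$ identity for $X_4^*$; once these conventions are pinned down the rest is routine linear algebra.
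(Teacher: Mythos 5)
Your proposal follows the paper's proof essentially step for step: the same upper bound via Theorem~\ref{sec:ind=1} (plus the observation that $H_1$ is not symmetric), the same parameterization $Y^* = aX_1^* + bX_2^* + cX_3^* + dX_4^*$ of the four-dimensional isometry algebra, the same connection table from the left-invariant Koszul formula, and the same key identity $(\nabla_{X_i}X_4^*)_e = [X_i, X_4^*]_e$ reducing everything to a linear system that forces $a = b = 0$ and ties $d$ to $c$. One small caveat worth checking: with the connection table and brackets exactly as you (and the paper) record them, one gets $(\nabla_{X_1}(X_3 + dX_4^*))_e = (d - \tfrac{1}{2})X_2(e)$, so the arithmetic actually yields $d = +\tfrac{1}{2}$ rather than $-\tfrac{1}{2}$; this sign discrepancy is inherited from the statement itself and does not affect the main conclusion $i_{\mathfrak s}(H_1) = 1$.
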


As mentioned before, the statement of the above theorem is true for any left-invariant metric on $H_1$.

\begin{remark}\label{sec:left-invariant-metric-H1}
  It is shown in \cite{lauret-2003} and also in \cite{kodama-takahara-tamaru-2011}, that the only $3$-dimensional simply connected Lie groups  admitting only one left-invariant metric, up to isometry and scaling, are the abelian Lie group $\mathbb R^3$, the simply connected Lie group acting simply transitively on the hyperbolic space $\mathbb H^3$, and the $3$-dimensional Heisenberg Lie group $H_1$. For the sake of completeness, we give a proof of this fact for the last case. Keeping the notation of the above paragraphs, let $\langle\cdot, \cdot\rangle'$ be an arbitrary inner product on $\mathfrak h_1$. By rescaling the metric we can assume that the left-invariant vector field $X_3$ has norm one with respect to both of the corresponding left-invariant metrics. Call $\mathfrak v$ the vector subspace of $\mathfrak h_1$ spanned by $X_1, X_2$, which is the orthogonal complement to $\mathfrak z$ with respect to $\langle\cdot, \cdot\rangle$. Let $\mathfrak v'$ be the orthogonal complement to $\mathfrak z$ with respect to $\langle\cdot, \cdot\rangle'$ and let $X_1', X_2'$ be an orthonormal basis of $\mathfrak v'$. It is easy to see that the linear isometry from $(\mathfrak h_1, \langle\cdot, \cdot\rangle)$ onto $(\mathfrak h_1, \langle\cdot, \cdot\rangle')$ mapping $X_1, X_2, X_3$ into $X_1', X_2', X_3$ respectively, lifts to an isometry from $(H_1, \langle\cdot, \cdot\rangle)$ onto $(H_1, \langle\cdot, \cdot\rangle')$.

  Recall that the above argument fails in higher dimensions (see \cite{vukmirovic-2015}). In fact, given an inner product $\langle\cdot, \cdot\rangle$ on the $(2n + 1)$-dimensional Heisenberg Lie algebra $\mathfrak h_n$, we decompose as an orthogonal sum $\mathfrak h_n = \mathfrak v \oplus \mathfrak z$ and for every $Z \in \mathfrak z$ we have associated the element $j(Z) \in \so(\mathfrak v)$ given by $\langle j(Z)X, Y\rangle = \langle[X, Y], Z\rangle$. Such a map is a geometric invariant. For the standard $H$-type left-invariant metric on $H_n$, $j(Z)$ acts on $\mathfrak v$ as a multiple of the multiplication by $\sqrt{-1}$, once identified $\mathfrak v \simeq \mathbb C^n$. But this is not always the case for an arbitrary left-invariant metric when $n > 1$.
\end{remark}

\section{The case of $\tilde E(2)$}

We present the universal cover $\tilde E(2)$ of the Euclidean group $E(2)$, as the underlying manifold $\mathbb R^3$ with the group multiplication given by
\begin{align*}
  (x, y, z)(x', y', z') & = 
(x', y', z')\begin{pmatrix}
  \cos z & \sin z & 0 \\
  -\sin z & \cos z & 0 \\
  0 & 0 & 1
\end{pmatrix} +
  (x, y, z) \\
  & = (x + x'\cos z -y'\sin z, y + x'\sin z + y'\cos z, z + z').
\end{align*}

Let us consider the basis of left-invariant fields given by
\begin{align*}
  X_1 = \cos z \frac{\partial}{\partial x} + \sin z\frac{\partial}{\partial y}, & & X_2 = -\sin z\frac{\partial}{\partial x} + \cos z\frac{\partial}{\partial y}, & & X_3 = \frac{\partial}{\partial z}.
\end{align*}

Such a basis satisfies the bracket relations
\begin{align*}
  [X_1, X_2] = 0, & & [X_1, X_3] = -X_2, & & [X_2, X_3] = X_1
\end{align*}
and one has the associated right-invariant vector fields
\begin{align*}
  X_1^* = \frac{\partial}{\partial x}, & & X_2^* = \frac{\partial}{\partial y}, & & X_3^* = -y\frac{\partial}{\partial x} + x\frac{\partial}{\partial y} + \frac{\partial}{\partial z}.
\end{align*}

It is a well-known fact that the left-invariant metric on $\tilde E(2)$ that makes of $X_1, X_2, X_3$ an orthonormal basis is flat (see for instance \cite{milnor-1976}), and hence isometric to the Euclidean $3$-dimensional space. Moreover, one can see that any flat metric on $\tilde E(2)$ is obtained from this one rescaling by a positive multiple the metric on the abelian ideal generated by $X_1$ and $X_2$. The non-flat left-invariant metrics on $\tilde E(2)$ are classified, up to isometric automorphism, in \cite{ha-lee-2009}, and they are represented in the basis $X_1, X_2, X_3$ by the symmetric positive-defined matrix
$$
\begin{pmatrix}
  1 & 0 & 0 \\
  0 & \mu & 0 \\
  0 & 0 & \nu
\end{pmatrix}$$
where $0 < \mu < 1$ and $\nu > 0$. Denote such a metric by $g_{\mu, \nu}$. In \cite{ha-lee-2012}, the isometry groups of $g_{\mu, \nu}$ are computed and, it particular, it follows that the full isometry group of $g_{\mu, \nu}$ has always dimension $3$. So, in order to compute $i_{\mathfrak s}(\tilde E(2), g_{\mu, \nu})$, we have to decide if there exists a right-invariant field
$$Y^* = aX_1^* + bX_2^* + cX_3^*$$
such that $(\nabla Y^*)_e = 0$. As usual, we denote $g_{\mu, \nu} = \langle\cdot, \cdot\rangle$. Now, using Koszul formula on Killing fields (\ref{eq:koszul-right-invariant}) and the brackets
\begin{align*}
  [X_1^*, Y^*] = cX_2^*, & & [X_2^*, Y^*] = -cX_1^*, & & [X_3^*, Y^*] = bX_1^* - aX_2^*
\end{align*}
we must have
\begin{align*}
  0 & = 2\langle\nabla_{X_1^*}Y^*, X_2^*\rangle_e \\
  & = c\langle X_2^*, X_2^*\rangle_e + \langle [X_1^*, X_2^*], Y^*\rangle_e + c\langle X_1^*, X_1^*\rangle_e \\
  & = c(1 + \mu),
\end{align*}
and so $c = 0$, 
\begin{align*}
  0 & = 2\langle\nabla_{X_1^*}Y^*, X_3^*\rangle_e \\
  & = c\langle X_2^*, X_3^*\rangle_e + b\langle X_2^*, X_2^*\rangle_e - b\langle X_1^*, X_1^*\rangle_e \\
  & = b(\mu - 1),
\end{align*}
which implies $b = 0$, and finally
\begin{align*}
  0 & = 2\langle\nabla_{X_2^*}Y^*, X_3^*\rangle_e \\
  & = -c\langle X_1^*, X_3^*\rangle_e - a\langle X_1^*, X_1^*\rangle_e + a\langle X_2^*, X_2^*\rangle_e \\
  & = a(1 - \mu)
\end{align*}
says that $a = 0$. So, there is not a non-zero Killing field $Y^*$ such that $(\nabla Y^*)_e = 0$ and therefore the index of symmetry of $g_{\mu, \nu}$ is equal to $0$ for all $0 < \mu < 1$, $\nu > 0$.

\begin{theorem}
  Let $g$ be a left-invariant metric on $\tilde E(2)$. Then $g$ has non-trivial index of symmetry if and only if $g$ is flat.
\end{theorem}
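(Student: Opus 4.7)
The statement is an ``iff'' between flatness of $g$ and non-triviality of the index of symmetry, so I would split the proof into two implications. For the ``if'' direction, note that a flat left-invariant metric on $\tilde E(2)$ makes it a complete simply connected flat Riemannian $3$-manifold, hence isometric to Euclidean $\mathbb R^3$; since Euclidean space is symmetric we get $i_{\mathfrak s}(\tilde E(2), g) = 3 > 0$. This direction is essentially immediate once I cite the standard existence of a flat left-invariant metric on $\tilde E(2)$ together with the usual rigidity of flat simply connected complete Riemannian manifolds.

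For the converse, I would argue contrapositively: assume $g$ is non-flat and show $i_{\mathfrak s}(\tilde E(2), g) = 0$. The classification in \cite{ha-lee-2009} lets me assume, up to isometric automorphism and scaling, that $g = g_{\mu,\nu}$ with $0 < \mu < 1$ and $\nu > 0$ in the basis $X_1, X_2, X_3$ fixed in the preceding paragraphs. By \cite{ha-lee-2012} the full isometry group of $g_{\mu,\nu}$ has dimension $3$, so every Killing field of $(\tilde E(2), g_{\mu,\nu})$ is right-invariant. It therefore suffices to show that the only $Y^* = aX_1^* + bX_2^* + cX_3^*$ satisfying $(\nabla Y^*)_e = 0$ is $Y^* = 0$.

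The routine computation is performed by combining the brackets $[X_1^*, Y^*] = cX_2^*$, $[X_2^*, Y^*] = -cX_1^*$, $[X_3^*, Y^*] = bX_1^* - aX_2^*$ with the Koszul formula \eqref{eq:koszul-right-invariant} to compute $\langle\nabla_{X_i^*}Y^*, X_j^*\rangle_e$ for the three pairs $i < j$. The resulting system simplifies to $c(1 + \mu) = 0$, $b(\mu - 1) = 0$, $a(1 - \mu) = 0$, and the strict inequality $0 < \mu < 1$ makes each of the three factors $1 + \mu$, $\mu - 1$, $1 - \mu$ non-zero, forcing $a = b = c = 0$. The only genuinely delicate ingredient is the classification input: had the inequality been $\mu \le 1$ (allowing $\mu = 1$), the ``only if'' direction would fail, so the real content of the proof is tracing where the strict inequality $\mu < 1$ characterizing the non-flat case comes from. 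Beyond that, the argument is a short linear-algebra verification in the isometry algebra.
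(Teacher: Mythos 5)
Your proposal is correct and follows essentially the same route as the paper: the flat case is handled by observing that $(\tilde E(2), g)$ is then isometric to Euclidean $\mathbb R^3$, hence symmetric, and the non-flat case reduces via the classification of \cite{ha-lee-2009} and the $3$-dimensionality of the isometry group from \cite{ha-lee-2012} to the same Koszul-formula computation yielding $c(1+\mu) = b(\mu-1) = a(1-\mu) = 0$. Your remark that the strict inequality $\mu < 1$ is the essential input is exactly the point on which the paper's argument turns as well.
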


\section{The case of $E(1,1)$}

We present the group $E(1, 1)$, of rigid motions of the Minkowski $2$-space, as the underlying manifold $\mathbb R^3$ endowed with the group multiplication
$$(x, y, z)(x', y', z') = (x + e^zx', y + e^{-z}y', z + z').$$
It is easy to see that the left-invariant fields
\begin{align*}
  X_1 = e^z\frac{\partial}{\partial x}, & & X_2 = e^{-z}\frac{\partial}{\partial y}, & & X_3 = \frac{\partial}{\partial z}
\end{align*}
form a basis of $\mathfrak e(1, 1)$ such that
\begin{align*}
  [X_1, X_2] = 0, & & [X_1, X_3] = -X_1, & & [X_2, X_3] = X_2.
\end{align*}
The right-invariant fields $X_i^*$ such that $X_i^*(e) = X_i(e)$ are given by
\begin{align*}
  X_1^* = \frac{\partial}{\partial x}, & & X_2^* = \frac{\partial}{\partial y}, & & X_3^* = x\frac{\partial}{\partial x} - y\frac{\partial}{\partial y} + \frac{\partial}{\partial z}
\end{align*}
From the results in \cite{ha-lee-2012}, we have that there exist, up to isometric automorphism, two families of left-invariant metrics on $E(1, 1)$, which are represented in the basis $X_1, X_2, X_3$ by the symmetric positive-definite matrices
\begin{align}\label{eq:metricsE11}
  \begin{pmatrix}
    1 & 0 & 0 \\
    0 & 1 & 0 \\
    0 & 0 & \nu
  \end{pmatrix}, & & 
  \begin{pmatrix}
    1 & 1 & 0 \\
    1 & \mu & 0 \\
    0 & 0 & \nu
  \end{pmatrix},
\end{align}
where $\nu > 0$ (in both cases) and $\mu > 1$. We denote these metrics by $g_\nu$ and $g_{\mu, \nu}$ respectively. It also follows from \cite{ha-lee-2012} that any left-invariant metric on $E(1, 1)$ has $3$-dimensional full isometry group. So, for any left-invariant metric on $E(1, 1)$, a generic Killing field has the form
$$Y^* = aX_1^* + bX_2^* + cX_3^*.$$
In order to study the equation $(\nabla Y^*)_e = 0$ it is useful to note that
\begin{align}\label{eq:bracketsE11}
  [X_1^*, Y^*] = cX_1^*, & & [X_2^*, Y^*] = -cX_2^*, & & [X_3^*, Y^*] = -aX_1^* + bX_2^*. 
\end{align}

We distinguish two separate cases according to the metrics above.

\subsection{Case $g_\nu$}

Assume that $(\nabla Y^*)_e = 0$ and denote $g_\nu = \langle\cdot, \cdot\rangle$. From (\ref{eq:koszul-right-invariant}) and (\ref{eq:bracketsE11}) we must have
\begin{align*}
  0 & = 2\langle\nabla_{X_1^*}Y^*, X_2^*\rangle_e \\
  & = c\langle X_1^*, X_2^*\rangle_e + \langle [X_1^*, X_2^*], Y^*\rangle_e + c\langle X_2^*, X_1^*\rangle_e \\
  & = 0,
\end{align*}
so, no condition is imposed on $c \in \mathbb R$. On the other hand, 
\begin{align*}
  0 & = 2\langle\nabla_{X_1^*}Y^*, X_3^*\rangle_e \\
  & = c\langle X_1^*, X_3^*\rangle_e + a\langle X_1^*, X_1^*\rangle_e + a\langle X_1^*, X_1^*\rangle_e \\
  & = 2a,
\end{align*}
which means $a = 0$, and
\begin{align*}
  0 & = 2\langle\nabla_{X_2^*}Y^*, X_3^*\rangle_e \\
  & = -c\langle X_2^*, X_3^*\rangle_e - b\langle X_2^*, X_2^*\rangle_e - b\langle X_2^*, X_2^*\rangle_e \\
  & = -2b,
\end{align*}
so $b = 0$. Thus, we conclude that $Y^* = X_3^*$ is a Killing field parallel at $e$, for any value of $\nu > 0$.

\subsection{Case $g_{\mu, \nu}$}

Assume again that $(\nabla Y^*)_e = 0$ and denote $g_{\mu, \nu} = \langle\cdot, \cdot\rangle$. This case is slightly different from the previous one since $X_1, X_2, X_3$ is not an orthogonal basis. More precisely, then non-zero inner products are $\langle X_1, X_1\rangle = \langle X_1, X_2\rangle = 1$, $\langle X_2, X_2\rangle = \mu$ and $\langle X_3, X_3\rangle = \nu$. Once again, using the Koszul formula for Killing fields and (\ref{eq:bracketsE11}) we have the equations
\begin{align*}
  0 & = 2\langle\nabla_{X_1^*}Y^*, X_2^*\rangle_e \\
  & = c\langle X_1^*, X_2^*\rangle_e + \langle [X_1^*, X_2^*], Y^*\rangle_e + c\langle X_2^*, X_1^*\rangle_e \\
  & = 2c,
\end{align*}
which implies $c = 0$,
\begin{align*}
  0 & = 2\langle\nabla_{X_1^*}Y^*, X_3^*\rangle_e \\
  & = c\langle X_1^*, X_3^*\rangle_e + \langle X_1^*, Y^*\rangle_e + \langle aX_1^* - bX_2^*, X_1^*\rangle_e \\
  & = a + b + a - b = 2a,
\end{align*}
so $a = 0$, and similarly
\begin{align*}
  0 & = 2\langle\nabla_{X_2^*}Y^*, X_3^*\rangle_e \\
  & = -c\langle X_2^*, X_3^*\rangle_e - \langle X_2^*, Y^*\rangle_e + \langle aX_1^* - bX_2^*, X_2^*\rangle_e \\
  & = - a - b + a - b = -2b,
\end{align*}
shows that $b = 0$. Therefore, all the metrics $g_{\mu, \nu}$ have index of symmetry equal to zero.

We summarize the results of this section as follows.

\begin{theorem}
  Let $g$ be a left-invariant metric on $E(1, 1)$. Then $g$ has non-trivial index of symmetry if and only if $g = g_\nu$ (as in (\ref{eq:metricsE11})) for some $\nu > 0$. Moreover, $i_{\mathfrak s}(E(1, 1), g_\nu) = 1$ and the right-invariant Killing field which at the identity is orthogonal to the abelian ideal generated by $X_1, X_2$ spans the symmetric subspace.
\end{theorem}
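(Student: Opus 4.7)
The plan is to assemble the theorem from the case-by-case analysis carried out in the two preceding subsections, together with the general classification of left-invariant metrics on $E(1,1)$ up to isometric automorphism. Since (by \cite{ha-lee-2009,ha-lee-2012}) every left-invariant metric on $E(1,1)$ is isometric to some $g_\nu$ or to some $g_{\mu,\nu}$ as displayed in \eqref{eq:metricsE11}, and since all such metrics have $3$-dimensional full isometry group, every Killing field is right-invariant and can be written as $Y^* = aX_1^* + bX_2^* + cX_3^*$. The theorem is therefore equivalent to solving the linear system $(\nabla Y^*)_e = 0$ in the two families, which has already been done above.

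For the family $g_{\mu,\nu}$ the three Koszul equations reduce to $2c = 0$, $2a = 0$ and $-2b = 0$, forcing $Y^* = 0$. Hence $i_{\mathfrak s}(E(1,1), g_{\mu,\nu}) = 0$, so these metrics have trivial index of symmetry. This gives one direction of the equivalence: a metric with non-trivial index of symmetry must be (up to isometric automorphism and scaling) of the form $g_\nu$.

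For the family $g_\nu$ the same three Koszul equations reduce to $0 = 0$, $2a = 0$ and $-2b = 0$. Thus the solution space is exactly $\mathbb R X_3^*$, so $Y^* = X_3^*$ is a nonzero Killing field with $(\nabla Y^*)_e = 0$. This shows $i_{\mathfrak s}(E(1,1), g_\nu) \ge 1$. To pin down the value, note that $(E(1,1), g_\nu)$ is not a symmetric space (for instance, it is a non-abelian solvable Lie group acting simply transitively, and $E(1,1)$ carries no bi-invariant Riemannian metric), so the index is at most $2$; and by Theorem~\ref{sec:ind=1}, an $n$-dimensional homogeneous space cannot have index of symmetry equal to $n-1$, ruling out the value $2$. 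Therefore $i_{\mathfrak s}(E(1,1), g_\nu) = 1$, and by construction the parallel-at-identity Killing field $X_3^*$ spans the symmetric subspace. Since $X_3^*(e) = X_3(e) = \partial/\partial z$ is orthogonal at $e$ to the span of $X_1, X_2$ with respect to $g_\nu$, this is exactly the right-invariant Killing field described in the statement.

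The only step that is not a purely mechanical verification is the exclusion of the value $i_{\mathfrak s} = 2$; this is where Theorem~\ref{sec:ind=1} does the essential work, and it is the main reason the preparatory material on parallel sections of $TM \oplus \so(TM)$ is needed. Everything else is a direct consequence of the Koszul computations already recorded above.
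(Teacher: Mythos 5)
Your proposal is correct and follows essentially the same route as the paper: the classification of metrics into the families $g_\nu$ and $g_{\mu,\nu}$, the fact that the $3$-dimensional isometry group forces every Killing field to be right-invariant, and the Koszul computations already recorded in the two subsections. One small correction to your closing remark: the appeal to Theorem~\ref{sec:ind=1} is superfluous here, since the linear system $(\nabla Y^*)_e=0$ is solved over the \emph{entire} $3$-dimensional space of Killing fields and its solution space is exactly $\mathbb R X_3^*$; hence $\mathfrak s_e=\mathbb R X_3(e)$ is one-dimensional by direct computation, and no separate argument is needed to exclude $i_{\mathfrak s}=2$.
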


\section{Quotient by the leaves of symmetry}

Let $G$ be a $3$-dimensional unimodular Lie group endowed with a left-invariant metric such that $i_{\mathfrak s}(G) = 1$, and let us denote the foliation of symmetry by $\mathcal L$. Assume that the quotient $G/\mathcal L$ admits an $\Iso_0(G)$-invariant metric. If $\dim\Iso_0(G) > 3$, then there exists a Killing field $X \neq 0$ on $G$ acting trivially on $G/\mathcal L$, since $\dim G/\mathcal L = 2$ and so the action of $\Iso_0(G)$ on $G/\mathcal L$ cannot be effective. Such a Killing field is always tangent to $\mathcal L$, or equivalently, $X$ generates the distribution of symmetry, $\mathfrak s = \mathbb RX$. Let us denote by $L(p)$ the leaf of symmetry by $p \in G$, and $\Phi_t$ the flow of $X$. If $u, v \in T_eG$ are orthogonal to the leaf of symmetry at $p$, that is $u, v \in X(p)^\bot$. Then $d\Phi_t(u)$ and $d\Phi_t(v)$ are orthogonal to $L(p)$ at $\Phi_t(p)$ and moreover, $\langle u, v\rangle = \langle d\Phi_t(u), d\Phi_t(v)\rangle$ for all $t$. This means that the projection $\pi: G \to G/\mathcal L$ is a Riemannian submersion. 

\begin{proposition}\label{sec:foliation-of-symmetry}
  If $M$ is one of the Riemannian manifolds $(\SU(2), g_{\lambda, 1, 1})$ with $\lambda > 1$, $(\SU(2), g_{1, 1, \nu})$ with $0 < \nu < 1$, $(\SL(2, \mathbb R), g_{\lambda, 1, 1})$ with $\lambda > 0$, or $H_1$ with a left-invariant metric, then $M \to M/\mathcal L$ is a Riemannian submersion where $\mathcal L$ is the foliation of symmetry and $M/\mathcal L$ has an $\Iso_0(M)$-invariant metric.
\end{proposition}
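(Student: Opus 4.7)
The plan is to verify, in each of the four listed cases, the two hypotheses of the general implication stated in the paragraph immediately preceding the proposition---namely, $\dim\Iso_0(M) > 3$ and the existence of an $\Iso_0(M)$-invariant metric on $M/\mathcal L$---and then invoke that discussion.

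For the dimension count, in every case $\dim\Iso_0(M) = 4$: this is the content of the Remark in Section~3 for the two $\SU(2)$ families, of the case $\mu=\nu$ of Section~4 for $(\SL(2,\mathbb R), g_{\lambda,1,1})$, and of the explicit description of $X_4^*$ at the start of Section~5 for $H_1$.

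To supply the invariant quotient metric I would exhibit an explicit Killing field $X_0$ tangent to $\mathfrak s$ everywhere. Since left translations are isometries and $\mathfrak s$ is $\Iso_0(M)$-invariant, $\mathfrak s$ is spanned at each point by the left-invariant extension of its generator at $e$; this is $X_0 = X_1$ in the two cases where $\mu=\nu$ (namely $(\SU(2), g_{\lambda,1,1})$ and $(\SL(2,\mathbb R), g_{\lambda,1,1})$), and $X_0 = X_3$ in the other two cases ($(\SU(2), g_{1,1,\nu})$ and $H_1$). A direct application of the Koszul formula (\ref{eq:koszul-left-invariant}) shows that $X_0$ is itself a Killing field, essentially because $\mathrm{ad}(X_0)$ is skew-symmetric with respect to the metric thanks to the rotational symmetry of the metric in the plane $X_0^\perp$. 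Its flow then consists of isometries preserving $\mathcal L$ leaf-wise and preserving $\mathfrak s^\perp$ along with its induced inner product, so that inner product descends to a well-defined Riemannian metric on $M/\mathcal L$. This quotient metric is $\Iso_0(M)$-invariant, since every isometry of $M$ preserves both $\mathcal L$ and the ambient metric.

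The main computational step is the case-by-case Koszul verification that the left-invariant $X_0$ is a Killing field; this is routine given the explicit brackets and metric data from the previous sections. With that in hand both hypotheses are met, and the general criterion from the paragraph preceding the proposition delivers the Riemannian submersion $\pi : M \to M/\mathcal L$ together with the invariant quotient metric.
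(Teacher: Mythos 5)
Your proposal is correct and follows essentially the same route as the paper: everything is reduced to exhibiting, in each case, a Killing field spanning the distribution of symmetry, whose flow transports the horizontal inner product along the leaves and makes the quotient metric well defined and $\Iso_0(M)$-invariant, exactly as in the discussion preceding the proposition. Your uniform verification---that the left-invariant extension $X_0$ of a generator of $\mathfrak s_e$ spans $\mathfrak s$ everywhere and is Killing because $\operatorname{ad}(X_0)$ is skew-symmetric---is a clean, self-contained substitute for the paper's case-by-case appeal in the accompanying remark (Berger spheres, natural reductivity of $g_{\lambda,1,1}$, centrality of $X_3$ in $\mathfrak h_1$), and it has the added merit of not presupposing the invariant quotient metric whose existence is part of the conclusion.
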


\begin{remark}
  \begin{enumerate}
  \item It is proved in \cite{berndt-olmos-reggiani-2015} that the families of metrics $g_{\lambda, 1, 1}$ $(\lambda > 1)$ and $g_{1, 1, \nu}$ on $\SU(2)$ correspond to Berger spheres. In this case we have that the Riemannian submersion $\SU(2) \to \SU(2)/\mathcal L \simeq \mathbb S^2$ fibers the round sphere $\mathbb S^2$.
  \item Recall that the metrics $g_{\lambda, 1, 1}$ $(\lambda > 0)$ on $\SL(2, \mathbb R)$ are naturally reductive (with respect to the full isometry group) and in this case we have that the distribution of symmetry coincides with the $\Iso_0(\SL(2, \mathbb R))$-invariant distribution induced by the fixed points of the isotropy. So the foliation of symmetry of $g_{\lambda, 1, 1}$ consists of closed geodesics and the Riemannian submersion $\SL(2, \mathbb R) \to \SL(2, \mathbb R)/\mathcal L \simeq \mathbb H^2$ is a circle bundle over the hyperbolic plane.
  \item It is easy to see that the Riemannian submersion $H_1 \to H_1/\mathcal L$ corresponds to the nontrivial vector bundle $H_1 \to \mathbb R^2$. In fact, one has that the Killing field $X_3$, in the center of $\mathfrak h_1$, generates the distribution of symmetry, and so $X_1, X_2$ span the horizontal distribution. Then by using O'Neill formula we have that the sectional curvature of $H_1/\mathcal L$ vanishes. Recall that, also in this case, the metric on $H_1$ is naturally reductive and the distribution of symmetry is the $\Iso(H_1)$-invariant distribution induced by the fixed points of the isotropy.
  \end{enumerate}
\end{remark}

\begin{proof}
  [{\proofname} of Proposition \ref{sec:foliation-of-symmetry}] It follows from the discussion at the beginning of the section and the above remark.
\end{proof}

Finally, assume that $i_{\mathfrak s}(G) = 1$ and that the isometry group of $G$ is $3$-dimensional. Let $Y \neq 0$ be a left-invariant field such that the associated Killing field $Y^*$ satisfies $(\nabla Y^*)_e = 0$. Then $\mathfrak s_e = \mathbb RY_e$ and, since $\mathfrak s$ is left-invariant, we have $\mathfrak s = \mathbb RY$. Assume that, in addition, the quotient by the foliation of symmetry admits a $G$-invariant metric such that $G \to G/\mathcal L$ is a Riemannian submersion. If $X^*$ is a Killing field such that $X^*|_e$ is orthogonal to $Y_e$, then $X^*$ must have constant length along the integral curve of $Y$ by the identity. By using this observation (see the next remark) we can show that $(\SL(2, \mathbb R), g_{\lambda, \lambda + 1, 1})$ and $(E(1, 1), g_\nu)$ never induce a Riemannian submersion onto their quotients by the leaves of symmetry.

\begin{remark}
  \begin{enumerate}
  \item   Let us consider the case $(\SL(2, \mathbb R), g_{\lambda, \lambda + 1, 1})$. Keeping the notation of the above paragraph and Section \ref{sec:case-sl2}, we can take $Y^* = X_3^*$ and $X^* = X_1^*$. So,
    \begin{align*}
      |X_1^*(\Exp(tX_3)| &  = |\Exp(-tX_3)X_1\Exp(tX_3)|\\
      & = |\cosh(2t)X_1 - \sinh(2t)X_2| \\
      & = \lambda\cosh(2t) + (\lambda + 1)|\sinh(2t)|
    \end{align*}
    is unbounded. 
  \item Analogously for $(E(1, 1), g_\nu)$, we can choose $Y^* = X_3^*$ and $X^* = X_1^*$. In this case $|X_1^*(\Exp(tX_3)| = e^{-t}$ which is also non-constant.
  \end{enumerate}
\end{remark}

\bibliography{/home/silvio/Dropbox/math/bibtex/mybib.bib}
\bibliographystyle{amsalpha}

\end{document}